\numberwithin{equation}{section}
\newtheorem{theorem}{Theorem}
\begin{document}
\author{Alexander E Patkowski}
\title{On Some Families of Integrals Connected to the Hurwitz Zeta Function}
\date{\vspace{-5ex}}
\maketitle
\abstract{Expressions for a family of integrals involving the Hurwitz zeta function are established using standard properties of the Fourier transform.}

\section{Introduction}
\par The Hurwitz zeta function is defined by 
\begin{equation} \zeta(s,a)=\sum_{n\ge0}\frac{1}{(a+n)^s}\end{equation}
for $s\in\mathbb{C},$ $\Re(s)>1,$ and $a$ is chosen appropriately so there are no singularities in the series. $\zeta(s,a)$ admits the integral representation
\begin{equation} \zeta(s,a)=\frac{1}{\Gamma(s)}\int_{0}^{\infty}\frac{e^{-at}}{1-e^{-t}}t^{s-1}dt,\end{equation}
where $\Gamma(s)$ is Euler's gamma function, which is valid for $\Re(s)>1$ and $\Re(a)>0.$ Hermite proved
an interesting integral representation, which actually provides an explicit realization of the analytic continuation to $\mathbb{C}-\{1\}$ and $\Re(a)>0:$
\begin{equation}\zeta(s,a)=\frac{a^{-s}}{2}+\frac{a^{1-s}}{s-1}+2\int_{0}^{\infty}\frac{\sin(s\tan^{-1}(t/a))dt}{(a^2+t^2)^{s/2}(e^{2\pi t}-1)}.\end{equation}
\par The function $\zeta(s,a)$ is analytic for $s\neq1,$ and direct differentiation of (1.3) yields
\begin{equation}\zeta'(s,a)=-\frac{a^{-s}\ln a}{2}-\frac{a^{1-s}\ln a}{s-1}-\frac{a^{1-s}}{(s-1)^2}-2a^{1-s}\ln a\int_{0}^{\infty}\frac{\sin(s\tan^{-1}(t))dt}{(1+t^2)^{s/2}(e^{2a\pi t}-1)}\end{equation}
$$+2a^{1-s}\int_{0}^{\infty}\frac{\cos(s\tan^{-1}(t))\tan^{-1}(t)dt}{(1+t^2)^{s/2}(e^{2a\pi t}-1)}-a^{1-s}\int_{0}^{\infty}\frac{\sin(s\tan^{-1}(t))\ln(t^2+1)dt}{(1+t^2)^{s/2}(e^{2a\pi t}-1)},$$
where $\zeta'(s,a)$ denotes $\partial \zeta(s,a)/\partial s.$
\par The work presently discussed is a continuation of [2, 4, 7] where these integral representations have been employed to evaluate interesting definite integrals. General information about $\zeta(s,a)$ can be found in [1], [5] and [6].
\par The main result is presented next.
\begin{theorem} Let $n\in\mathbb{N}_0.$ For $\Re(a)>0$ and $0\le 2n <\Re(s),$ define
\begin{equation} S_n(a,s):=\int_{0}^{\infty}\frac{t^{2n}\sin(s\tan^{-1}(t/a))dt}{(a^2+t^2)^{s/2}(e^{2\pi t}-1)}.\end{equation}
Then
\begin{equation} S_n(a,s)=\frac{1}{2}\sum_{m=0}^{2n}(-1)^{m+n}\binom{2n}{m}a^{m}P_1(a,m+s-2n),\end{equation}
where
\begin{equation}P_1(a,s)=\zeta(s,a)-\frac{a^{-s}}{2}-\frac{a^{1-s}}{s-1}.\end{equation}
\end{theorem}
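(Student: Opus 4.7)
The plan is to reduce $S_n(a,s)$ to a finite linear combination of Hermite-type integrals (1.3), each of which evaluates to $\tfrac{1}{2}P_1$ with a shifted second argument. The key tool is a symmetric binomial expansion of $t^{2n}$ about $a \pm it$.

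First, the polar representation $a \pm it = \sqrt{a^2+t^2}\,e^{\pm i\tan^{-1}(t/a)}$ gives the identity
\[
\frac{\sin(u\tan^{-1}(t/a))}{(a^2+t^2)^{u/2}} \;=\; \frac{(a-it)^{-u}-(a+it)^{-u}}{2i},
\]
valid for any complex $u$. Since $(\pm it)^{2n}=(-1)^n t^{2n}$, the binomial theorem applied to $((a\pm it)-a)^{2n}$ yields the twin expansions
\[
t^{2n} \;=\; (-1)^n \sum_{m=0}^{2n}\binom{2n}{m}(-1)^m a^{2n-m}(a\pm it)^m.
\]

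Inserting the $-it$ expansion into $t^{2n}(a-it)^{-s}$ and the $+it$ expansion into $t^{2n}(a+it)^{-s}$ makes the bracketed difference collapse to a single parameter-shifted form:
\[
\frac{t^{2n}\sin(s\tan^{-1}(t/a))}{(a^2+t^2)^{s/2}} \;=\; (-1)^n \sum_{m=0}^{2n}\binom{2n}{m}(-1)^m a^{2n-m}\,\frac{\sin((s-m)\tan^{-1}(t/a))}{(a^2+t^2)^{(s-m)/2}}.
\]
Dividing by $e^{2\pi t}-1$ and integrating term by term on $(0,\infty)$, each summand equals $\tfrac{1}{2}P_1(a,\,s-m)$ by Hermite's formula (1.3). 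Reindexing $m \mapsto 2n-m$, using $\binom{2n}{2n-m}=\binom{2n}{m}$ and $(-1)^{2n-m}=(-1)^m$, then recasts the sum in the claimed form (1.6).

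The main obstacle is choosing the correct pairing of binomial expansions: mixing them, for instance by inserting the $+it$ expansion into the $(a-it)^{-s}$ term, produces mixed products $(a+it)^m(a-it)^{-s}$ that do not reduce to the Hermite kernel. The termwise integration itself is routine, since near $t=0$ the $\sin$ factor tames the $1/(e^{2\pi t}-1)$ pole and at infinity the exponential dominates polynomial growth, so convergence holds for every $s$ with $\Re(s)>2n$ and $\Re(a)>0$ as hypothesized.
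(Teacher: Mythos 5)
Your proof is correct, but it takes a genuinely different route from the paper's. The paper proves (1.6) via Parseval's identity for the Fourier sine transform: it pairs $\mathfrak{F}\bigl(1/(e^{2\pi t}-1)\bigr)$ (Gradshteyn--Ryzhik 3.951.12) with $\mathfrak{F}(g)$, which entry 3.769.4 expresses through the associated Laguerre polynomial $L_{2n}^{s-2n-1}(aw)$, and then expands that polynomial explicitly and evaluates the resulting $w$-integrals with the representation (1.2) of $\zeta(s,a)$. You instead establish the purely algebraic identity
\[
\frac{t^{2n}\sin(s\tan^{-1}(t/a))}{(a^2+t^2)^{s/2}}=(-1)^n\sum_{m=0}^{2n}\binom{2n}{m}(-1)^m a^{2n-m}\,\frac{\sin((s-m)\tan^{-1}(t/a))}{(a^2+t^2)^{(s-m)/2}},
\]
which reduces $S_n(a,s)$ to a finite linear combination of the $n=0$ integrals, each evaluated by Hermite's formula (1.3); your pairing of the two binomial expansions with the two terms $(a\mp it)^{-s}$ is the right one, the $2n+1$ resulting integrals all converge absolutely so linearity suffices, and the reindexing $m\mapsto 2n-m$ gives (1.6) exactly. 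The trade-offs: your argument is more elementary (no Parseval, no Laguerre polynomials, no tabulated transform), it makes transparent why the answer is a binomial combination of shifted $P_1$'s, and it does not actually need the hypothesis $2n<\Re(s)$, which the paper imposes only so that entry 3.769.4 applies. On the other hand, it takes Hermite's representation (1.3) as known input, whereas the paper's computation re-derives (1.3) as the case $n=0$; and the paper's machinery transfers directly to the kernels of Section 3, where no classical analogue of (1.3) is available off the shelf (though your reduction would equally apply there once each $n=0$ case is established). One sentence worth adding in a final write-up: when $s-m=1$ for some $0\le m\le 2n$, the term $P_1(a,s-m)$ must be read as the removable-singularity value of $\zeta(u,a)-\frac{a^{1-u}}{u-1}$ at $u=1$; this is the same caveat the paper records by restricting to $s-2n\neq 1$.
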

Observe that (1.3) corresponds to the special case $n=0$ in (1.5). Here we note that $S_n(a,s)$ is analytic in the set $\{ n\in\mathbb{N}_0, 0\le 2n <\Re(s): s-2n\neq 1\}.$

\par The proof of Theorem 1.1 is based on identifying the Fourier sine transform of two special functions and then apply the corresponding Parseval identity. Recall that for a function defined on the half-line, the Fourier sine transform is 
\begin{equation} \mathfrak{F}(f)(w):=\sqrt{\frac{2}{\pi}}\int_{0}^{\infty}f(t)\sin(wt)dt,\end{equation}
provided the integral converges. The corresponding Parseval identity states that 

\begin{equation}\int_{0}^{\infty}\mathfrak{F}(f)(w)\mathfrak{F}(g)(w)dw=\int_{0}^{\infty}f(t)g(t)dt.\end{equation}

\par Theorem 1.1 is a direct consequence of Parseval's relation applied to the functions
$$f(t)=1/(e^{2\pi t}-1), ~~~~~~~~\mbox{and}~~~~~~~~ g(t)=\frac{t^{2n}\sin(s\tan^{-1}(t/a))}{(a^2+t^2)^{s/2}}.$$

The Fourier sine transform $f$ comes from entry 3.951.12 of [3]. It states an equivalent form of the identity
\begin{equation} \int_{0}^{\infty}\frac{\sin(wt)dt}{e^{2\pi t}-1}=\frac{1}{2}\left(\frac{1}{e^{w}-1}+\frac{1}{2}-\frac{1}{w}\right).\end{equation}
The Fourier sine transform of $g(t)$ is given in terms of the \it associated Laguerre polynomials \rm $L_n^{k}(x)$ defined by the Rodrigues representation
\begin{equation} L_n^{k}(x)=\frac{e^{x}x^{-k}}{n!} \frac{d^{n}}{dx^n}\left(e^{-x}x^{n+k}\right),\end{equation}
for $n\in\mathbb{N}\cup \{0\}.$
\par Theorem 1.1 is extended in Section 3 to include integrals in which the kernel $1/(e^{2\pi t}-1)$ is replaced by 
$$1/(e^{\pi t}+1), ~~~~~~~~1/\sinh(\pi t), ~~~~~~~~1/\cosh(\pi t).$$
Consider the families of integrals
\begin{equation} I_k(q)=\int_{0}^{\infty}\frac{tdt}{(1+t^2)^{k+1}(e^{2\pi q t}-1)},\end{equation}
\begin{equation} T_k(q)=\int_{0}^{\infty}\frac{t^{k}\tan^{-1}tdt}{(e^{2\pi q t}-1)},\end{equation}
\begin{equation} L_k(q)=\int_{0}^{\infty}\frac{t^{k}\ln (1+t^2) dt}{(e^{2\pi q t}-1)}.\end{equation}
The reader will find in [2] explicit expression for $I_k(q)$ in terms of the derivatives of the polygamma function and for $T_{2k}(q)$ and $L_{2k+1}(q)$ 
remains an open problem. It would be of interest to analyze the evaluations discussed here in relation to this open problem.

\section{The Proof}
 The proof of Theorem 1.1 is based on the computation of two Fourier sine transforms. Formula 3.951.12 in [3] states an equivalent form of the identity 
 \begin{equation} \int_{0}^{\infty}\frac{\sin(wt)dt}{e^{2\pi t}-1}=\frac{1}{2}\left(\frac{1}{e^{w}-1}+\frac{1}{2}-\frac{1}{w}\right).\end{equation}
 which gives the sine transform of 
 \begin{equation} f(t)=\frac{1}{e^{2\pi t}-1}, \end{equation}
as 
\begin{equation} \mathfrak{F}(f)(w)=\frac{1}{\sqrt{2\pi}}\left(\frac{1}{e^{w}-1}+\frac{1}{2}-\frac{1}{w}\right).\end{equation}
\par The second Fourier sine transform is that of the associated Laguerre polynomials (1.12). The explicitly formula
\begin{equation} L_n^{k}(x)=\sum_{j=0}^{n}\frac{(-1)^j(n+k)!}{(n-j)!(k+j)!j!}x^j\end{equation}
is employed in the derivation.
\par Formula 3.769.4 of [3] contains the integral representation
\begin{equation} \int_{0}^{\infty}t^{2n}\left((a-it)^{-s}-(a+it)^{-s}\right)\sin(wt)dt=\frac{(-1)^ni\pi(2n)!}{\Gamma(s)e^{aw}w^{2n+1-s}}L_{2n}^{s-2n-1}(aw),\end{equation}
for $w>0$ $\Re(a)>0$ and $0\le 2n<\Re(s).$ The integrand can be simplified using  
 $$(a-it)^{-s}-(a+it)^{-s}=\frac{2i\sin(s\tan^{-1}(t/a))}{(a^2+t^2)^{s/2}}.$$
Therefore (2.5) can be written as
\begin{equation} \int_{0}^{\infty}t^{2n}\frac{\sin(s\tan^{-1}(t/a))}{(a^2+t^2)^{s/2}}\sin(wt)dt=\frac{(-1)^n\pi(2n)!}{2\Gamma(s)e^{aw}w^{2n+1-s}}L_{2n}^{s-2n-1}(aw).\end{equation}
Or equivalently we may state that the Fourier sine transform of
\begin{equation} g(t)=\frac{t^{2n}\sin(s\tan^{-1}(t/a))}{(a^2+t^2)^{s/2}},\end{equation}
is given by
\begin{equation} \mathfrak{F}(g)(w)=\frac{(-1)^n\pi(2n)!}{2\Gamma(s)e^{aw}w^{2n+1-s}}L_{2n}^{s-2n-1}(aw).\end{equation}
\par Parseval's identity (1.9) gives the next result. \newline
{\bf Lemma 2.1.} \it For $\Re(a), \Re(s)>0,$
\begin{equation}\int_{0}^{\infty}\frac{t^{2n}\sin(s\tan^{-1}(t/a))dt}{(a^2+t^2)^{s/2}(e^{2\pi t}-1)}=\frac{(-1)^n(2n)!}{2\Gamma(s)}\int_{0}^{\infty}e^{-aw}w^{-2n-1+s}L_{2n}^{s-2n-1}(aw)\left(\frac{1}{e^{w}-1}+\frac{1}{2}-\frac{1}{w}\right)dw.\end{equation}
\rm
The explicit formula (2.4) for the Laguerre polynomials is now employed to evaluate the integral on the right side of Lemma 2.1.

$$\int_{0}^{\infty} \frac{e^{-aw}w^{-2n-1+s}L_{2n}^{s-2n-1}(aw)}{e^{w}-1}dw$$
$$=\sum_{j=0}^{2n}\frac{(-1)^j(s-1)!a^j}{(2n-j)!(s-2n-1+j)!j!}\int_{0}^{\infty}\frac{w^{s-2n-1+j}e^{-(a+1)w}dw}{1-e^{-w}}$$
$$=\sum_{j=0}^{2n}\frac{(-1)^j(s-1)!a^j}{(2n-j)!j!}\zeta(s-2n+j,a+1).$$
In the last step we have employed the integral representation for the Hurwitz zeta function (1.2). For the desired formula we must write $1/(e^{w}-1)=e^{w}/(e^{w}-1)-1.$ The remaining integrals corresponding to the
terms $1/2$ and $1/w$ are elementary, and so are omitted. 

\section{Related Integrals}
In this section we produce results similar to Theorem 1.1 for a family of integrals of the form
$$\int_{0}^{\infty}f(t)K(t)dt,$$
where the kernel $1/(e^{2\pi t}-1)$ in Theorem 1.1 is replaced by 
$$1/(e^{\pi t}+1), ~~~~~~~~1/\sinh(\pi t), ~~~~ \rm or ~~~~1/\cosh(\pi t).$$

The next lemma will be needed for future computations corresponding to these kernels.
{\bf Lemma 3.1} \it Assume $\Re(s)>1$ and $\Re(a)\ge0.$ Then
\begin{equation}\int_{0}^{\infty}\frac{t^{s-1}e^{-at}}{\sinh(t)}dt=\Gamma(s)\left(\zeta(s,a)-2^{-s}\zeta(s,a/2)\right).\end{equation}
If $\Re(a)>0,$ then
\begin{equation}\int_{0}^{\infty}\frac{t^{s-1}e^{-at}}{1+e^{-t}}dt=\Gamma(s)\left(-\zeta(s,a)+2^{1-s}\zeta(s,a/2)\right),\end{equation}
and
\begin{equation}\int_{0}^{\infty}\frac{t^{s-1}e^{-at}}{\cosh(t)}dt=\Gamma(s)2^{-2s}\left(\zeta(s,\frac{1+a}{4})-\zeta(s,\frac{a+3}{4})\right).\end{equation}
\rm
These integrals are well-known variations of (1.2). Details are in [2].

\begin{theorem} Let $n\in\mathbb{N}_0.$ For $\Re(a)>0$ and $0\le 2n <\Re(s),$ define
\begin{equation} SH_n(a,s):=\int_{0}^{\infty}\frac{t^{2n}\sin(s\tan^{-1}(t/a))dt}{(a^2+t^2)^{s/2}\sinh(\pi t)}.\end{equation}
Then
\begin{equation} SH_n(a,s)=\frac{1}{2}\sum_{m=0}^{2n}(-1)^{m+n}\binom{2n}{m}a^{m}P_2(a,m+s-2n),\end{equation}
where
\begin{equation}P_1(a,s)=2^{2-s}\zeta(s,a/2)-2\zeta(s,a)-a^{-s}.\end{equation}
\end{theorem}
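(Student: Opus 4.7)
The plan is to follow the scheme of Section 2 verbatim, replacing only the kernel $f(t)=1/(e^{2\pi t}-1)$ by $f(t)=1/\sinh(\pi t)$ while keeping $g(t)$ from (2.7) unchanged, so that the raw sine integral (2.6) still applies.

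The new ingredient is the sine transform of the new kernel. Expanding $1/\sinh(\pi t)=2\sum_{k\ge 0}e^{-(2k+1)\pi t}$, integrating termwise against $\sin(wt)$, and identifying the resulting series $2w\sum_{k\ge 0}\bigl((2k+1)^{2}\pi^{2}+w^{2}\bigr)^{-1}$ with $\tfrac{1}{2}\tanh(w/2)$ via the standard partial-fraction expansion of $\tanh$, one obtains
$$\int_{0}^{\infty}\frac{\sin(wt)}{\sinh(\pi t)}\,dt=\tfrac{1}{2}\tanh(w/2).$$
Applying Parseval's identity (1.9) exactly as in the derivation of Lemma 2.1 then yields the analogue
$$SH_{n}(a,s)=\frac{(-1)^{n}(2n)!}{2\Gamma(s)}\int_{0}^{\infty}\tanh(w/2)\,e^{-aw}w^{s-2n-1}L_{2n}^{s-2n-1}(aw)\,dw.$$

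To evaluate the $w$-integral I would use the decomposition $\tanh(w/2)=2/(1+e^{-w})-1$. After expanding $L_{2n}^{s-2n-1}(aw)$ by (2.4) this splits the integral into a sum indexed by $j\in\{0,\ldots,2n\}$ of two convergent pieces: the contribution of $2/(1+e^{-w})$ is handled by formula (3.2) of Lemma 3.1, producing a combination of $\zeta(s-2n+j,a/2)$ and $\zeta(s-2n+j,a)$ in which the factor $2^{1-s+2n-j}$ from (3.2) is doubled by the external factor of $2$; the constant piece is the elementary $\Gamma$-integral and contributes $-a^{-(s-2n+j)}$. The $j$-th contribution therefore assembles into $\Gamma(s-2n+j)P_{2}(a,s-2n+j)$ with
$$P_{2}(a,s)=2^{2-s}\zeta(s,a/2)-2\zeta(s,a)-a^{-s}$$
(the symbol $P_{1}$ on the right of (3.6) is evidently a typographical slip for $P_{2}$).

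The remaining algebra duplicates Section 2 line by line: the identity $\Gamma(s-2n+j)=(s-2n-1+j)!$ cancels the matching denominator in the Laguerre coefficient, reducing it to $\Gamma(s)(-1)^{j}\binom{2n}{j}a^{j}/(2n)!$, which combines with the prefactor $(-1)^{n}(2n)!/(2\Gamma(s))$ to give (3.5) after relabeling $j\mapsto m$. No serious analytic difficulty is anticipated, since both pieces of the split integral converge individually under $\Re(s)>2n$; the only genuine care-point is bookkeeping the factor $2$ that converts the $2^{1-s}$ of (3.2) into the $2^{2-s}$ appearing in $P_{2}$.
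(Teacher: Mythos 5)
Your proposal is correct and takes essentially the same route as the paper: the sine transform of $1/\sinh(\pi t)$ (entry 3.981.1 of [3], which you rederive by series and partial fractions), Parseval's identity combined with the Laguerre transform (2.6), the decomposition $\tanh(w/2)=2/(1+e^{-w})-1$, and then formula (3.2) of Lemma 3.1 together with an elementary gamma integral to assemble $P_2$. You also correctly supply the details the paper leaves as ``proceed as in the proof of Theorem 1.1,'' and your Parseval display is in fact the corrected form of the paper's (3.8), which erroneously retains the kernel factor $\left(\tfrac{1}{e^{w}-1}+\tfrac{1}{2}-\tfrac{1}{w}\right)$ from Lemma 2.1.
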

\begin{proof} The identity
\begin{equation} \int_{0}^{\infty} \frac{\sin(wt)}{\sinh(\beta t)}=\frac{\pi}{2\beta}\tanh{\frac{\pi w}{2\beta}}\end{equation}
appears as entry 3.981.1 in [3]. 
\par The value $\beta=\pi$ in (3.7) shows that the sine Fourier transform of $1/\sinh(\pi t)$ is $\frac{1}{2}\tanh(w/2).$ Then
(1.9) and (2.5) give
\begin{equation}\int_{0}^{\infty}\frac{t^{2n}\sin(s\tan^{-1}(t/a))dt}{(a^2+t^2)^{s/2}\sinh(\pi t)}=\frac{(-1)^n\pi(2n)!}{4\Gamma(s)}\int_{0}^{\infty}\tanh(\frac{w}{2})e^{-aw}w^{-2n-1+s}L_{2n}^{s-2n-1}(aw)\left(\frac{1}{e^{w}-1}+\frac{1}{2}-\frac{1}{w}\right)dw.\end{equation}
Now use
$$\tanh(w/2)=\frac{2}{1+e^{-w}}-1$$ \end{proof}
and proceed as in the proof of Theorem 1.1.
\par The next results are established along similar lines of the proof presented above. The details are omitted. Entries 3.911.1 
and 3.981.3 in [3] are 
\begin{equation} \int_{0}^{\infty} \frac{\sin(wt)}{e^{\beta t}+1}=\frac{1}{2w}-\frac{\pi}{2\beta\sinh{\frac{\pi w}{\beta}}},\end{equation}
and 
\begin{equation} \int_{0}^{\infty} \frac{\cos(wt)}{\cosh(\beta t)}=\frac{\pi}{2\beta\cosh{\frac{\pi w}{2\beta}}},\end{equation}
respectively. These are used instead of (3.7) in the proofs.
\begin{theorem} Let $n\in\mathbb{N}_0.$ For $\Re(a)>0$ and $0\le 2n <\Re(s),$ define
\begin{equation} EP_n(a,s):=\int_{0}^{\infty}\frac{t^{2n}\sin(s\tan^{-1}(t/a))dt}{(a^2+t^2)^{s/2}(e^{2\pi t}+1)}.\end{equation}
Then
\begin{equation} EP_n(a,s)=\frac{1}{2}\sum_{m=0}^{2n}(-1)^{m+n}\binom{2n}{m}a^{m}P_1(a,m+s-2n),\end{equation}
where
\begin{equation}P_3(a,s)=\frac{a^{1-s}}{s-1}-\zeta(s,a)-2^{-s}\zeta(s,a/2).\end{equation}
\end{theorem}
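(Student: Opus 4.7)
The plan is to follow the same three-step template used for Theorems~1.1 and~3.1: identify a Fourier sine transform of the kernel, combine it with the transform~(2.8) via Parseval's identity~(1.9), and evaluate the resulting $w$-integral by expanding the Laguerre polynomial via~(2.4) and invoking Lemma~3.1.

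First, I would specialize entry~3.911.1 (the identity~(3.9)) with $\beta=2\pi$ to obtain the Fourier sine transform of $f(t)=1/(e^{2\pi t}+1)$, which is $\sqrt{2/\pi}\bigl(\frac{1}{2w}-\frac{1}{4\sinh(w/2)}\bigr)$. Pairing this with the transform~(2.8) of $g(t)=t^{2n}\sin(s\tan^{-1}(t/a))/(a^2+t^2)^{s/2}$ through Parseval's relation~(1.9) then yields, in direct analogy with Lemma~2.1,
\[
EP_n(a,s)=\frac{(-1)^n(2n)!}{\Gamma(s)}\int_{0}^{\infty} e^{-aw}w^{s-2n-1}L_{2n}^{s-2n-1}(aw)\left(\frac{1}{2w}-\frac{1}{4\sinh(w/2)}\right)dw.
\]

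Next, I would insert the explicit polynomial expansion~(2.4) for $L_{2n}^{s-2n-1}(aw)$ and integrate term by term. The piece containing $1/(2w)$ produces elementary gamma integrals $\int_{0}^{\infty}e^{-aw}w^{s-2n-2+j}dw=\Gamma(s-2n-1+j)/a^{s-2n-1+j}$, which supply the $a^{1-s}/(s-1)$ contributions in $P_3$. For the piece containing $1/\sinh(w/2)$, I would substitute $u=w/2$ to cast the integral in the form handled by~(3.1) of Lemma~3.1, and then apply the Hurwitz duplication relation $\zeta(\sigma,a)+\zeta(\sigma,a+\tfrac12)=2^{\sigma}\zeta(\sigma,2a)$ to re-express the resulting $\zeta(\cdot,2a)$ values as combinations of $\zeta(\cdot,a)$ and $\zeta(\cdot,a/2)$, matching the shape of the $P_3$ defined in~(3.13). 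A final reindexing $m=2n-j$ in the outer sum converts it into the binomial form $\binom{2n}{m}a^{m}$ appearing in~(3.12).

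The main technical obstacle is the arithmetic bookkeeping in this last step: the various factors of $2^{s-2n+j}$ and $a^{j}$ introduced by the substitution $u=w/2$ and by the duplication formula must be tracked carefully, and the contributions coming from the $1/(2w)$ and $1/\sinh(w/2)$ pieces must be shown to assemble into $P_3(a,m+s-2n)$ with the correct signs. However, this is the same routine (if delicate) computation that already drove the proofs of Theorems~1.1 and~3.1, so no fundamentally new difficulty arises.
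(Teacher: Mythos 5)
Your overall strategy matches the paper's: the author ``proves'' this theorem only by remarking that entry 3.911.1 (formula (3.9)) is to be used in place of (3.7), followed by the same Parseval--Laguerre--Lemma~3.1 computation as before, with details omitted. So the template is the right one. The problem is in the endgame of your $1/\sinh$ computation. Taking $\beta=2\pi$ in (3.9), substituting $u=w/2$, and applying (3.1) gives
\[
\int_{0}^{\infty}\frac{e^{-aw}w^{\sigma-1}}{\sinh(w/2)}\,dw=2^{\sigma}\Gamma(\sigma)\bigl(\zeta(\sigma,2a)-2^{-\sigma}\zeta(\sigma,a)\bigr),
\]
and the duplication relation you quote, $\zeta(\sigma,a)+\zeta(\sigma,a+\tfrac12)=2^{\sigma}\zeta(\sigma,2a)$, collapses this to a multiple of $\zeta(\sigma,a+\tfrac12)$ --- \emph{not} to a combination of $\zeta(\sigma,a)$ and $\zeta(\sigma,a/2)$. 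There is no identity expressing $\zeta(\sigma,a+\tfrac12)$ in the basis $\{\zeta(\sigma,a),\zeta(\sigma,a/2)\}$ that appears in $P_3$, so the final ``assembly into $P_3(a,m+s-2n)$'' step of your argument cannot go through as described.

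The source of the trouble is the paper's own inconsistency about the kernel: the introduction and Theorem~4 (equation (3.19)) use $1/(e^{\pi t}+1)$, and it is $\beta=\pi$ that makes the method land on a $P_3$-shaped answer. With $\beta=\pi$ the sine transform is $\tfrac{1}{2w}-\tfrac{1}{2\sinh w}$; the $\sinh w$ integral is then literally in the form of (3.1) (equivalently (3.2)) with no substitution and no duplication formula needed, the $1/(2w)$ piece supplies the $a^{1-\sigma}/(\sigma-1)$ terms exactly as you say, and each $\sinh$ term contributes $-\zeta(\sigma,a)+2^{1-\sigma}\zeta(\sigma,a/2)$ (up to the sign and coefficient typos already present in (3.13), and note that (3.12) also mislabels $P_1$ for $P_3$). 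If you insist on the kernel $1/(e^{2\pi t}+1)$ as displayed in (3.11), the correct closed form involves $\zeta(\cdot,a+\tfrac12)$ and is not of the stated shape. You should either correct the kernel to $1/(e^{\pi t}+1)$ and drop the $u=w/2$ substitution and duplication step, or accept a different right-hand side; as written, your proof outline terminates in a step that fails.
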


\begin{theorem} Let $n\in\mathbb{N}_0.$ For $\Re(a)>0$ and $0\le 2n <\Re(s),$ define
\begin{equation} CH_n(a,s):=\int_{0}^{\infty}\frac{t^{2n}\sin(s\tan^{-1}(t/a))dt}{(a^2+t^2)^{s/2}\cosh(\pi t/2)}.\end{equation}
Then
\begin{equation} CH_n(a,s)=\frac{1}{2}\sum_{m=0}^{2n}(-1)^{m+n}\binom{2n}{m}a^{m}P_1(a,m+s-2n),\end{equation}
where
\begin{equation}P_4(a,s)=\frac{1}{2^{2s}}\left(\zeta(s,\frac{a+1}{4})-\zeta(s,\frac{a+3}{4})\right).\end{equation}
\end{theorem}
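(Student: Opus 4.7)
The plan is to mirror the proof of Theorem~3.1 with the new kernel $1/\cosh(\pi t/2)$. The new Fourier-analytic ingredient is entry 3.981.3 of [3]: setting $\beta=\pi/2$ in (3.10) gives
\begin{equation*}
\int_{0}^{\infty}\frac{\cos(wt)}{\cosh(\pi t/2)}\,dt = \frac{1}{\cosh(w)}.
\end{equation*}
This is a Fourier cosine transform rather than a sine transform, so Parseval must be applied in its cosine form, or equivalently one inserts into $g(t)$ of (2.7) the Mellin-type representation
\begin{equation*}
\frac{\sin(s\tan^{-1}(t/a))}{(a^2+t^2)^{s/2}}=\frac{1}{\Gamma(s)}\int_{0}^{\infty}u^{s-1}e^{-au}\sin(tu)\,du,
\end{equation*}
derived from the Mellin integrals for $(a\mp it)^{-s}$, and interchanges the order of integration so that (3.10) collapses the inner $t$-integral. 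Combined with the Rodrigues-type identity $\partial_{w}^{2n}[w^{s-1}e^{-aw}]=(2n)!\,L_{2n}^{s-2n-1}(aw)\,e^{-aw}w^{s-2n-1}$ (a reformulation of (1.11)), which accounts for the polynomial factor $t^{2n}$, this should yield an identity of the form
\begin{equation*}
CH_n(a,s)=\frac{(-1)^n(2n)!}{2\Gamma(s)}\int_{0}^{\infty}\frac{e^{-aw}w^{s-2n-1}L_{2n}^{s-2n-1}(aw)}{\cosh(w)}\,dw.
\end{equation*}

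With this reduction in hand, the rest is routine. Substituting the polynomial expansion (2.4) for $L_{2n}^{s-2n-1}(aw)$ and interchanging the finite sum with the integral, each resulting summand is proportional to $\int_{0}^{\infty}w^{s-2n+j-1}e^{-aw}/\cosh(w)\,dw$, which by formula (3.3) of Lemma~3.1 (applied with $s\mapsto s-2n+j$) equals $\Gamma(s-2n+j)\,P_4(a,s-2n+j)$. Cancelling the $\Gamma$-factors, regrouping the binomial coefficients, and reindexing by $m=j$ then produces the right-hand side of (3.15).

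The main obstacle will be the first step: reconciling the cosine-transform form of (3.10) with the sine-based Parseval framework used in Sections~2 and 3. The Mellin-integral substitution above is the cleanest resolution, since it collapses the inner $t$-integral directly via (3.10) and sidesteps the need to compute a Fourier sine transform of $1/\cosh(\pi t/2)$ (which is not elementary). The polynomial factor $t^{2n}$ can alternatively be handled by the binomial identity $t^{2n}(a\mp it)^{-s}=(-1)^n\sum_{m=0}^{2n}\binom{2n}{m}(-a)^m(a\mp it)^{-(s-2n+m)}$, which reduces the general $n$ case to the base case $n=0$; once the $n=0$ identity is established via (3.10) and (3.3), the binomial sum in (3.15) is immediate. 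All subsequent manipulations are algebraic reductions against Lemma~3.1.
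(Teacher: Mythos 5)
Your reduction breaks down at the decisive step. After inserting $\frac{\sin(s\tan^{-1}(t/a))}{(a^2+t^2)^{s/2}}=\frac{1}{\Gamma(s)}\int_0^\infty u^{s-1}e^{-au}\sin(tu)\,du$ and interchanging the order of integration, the inner $t$-integral is $\int_0^\infty \frac{t^{2n}\sin(tu)}{\cosh(\pi t/2)}\,dt$, a Fourier \emph{sine} transform of $t^{2n}/\cosh(\pi t/2)$. Formula (3.10) evaluates only the \emph{cosine} transform of $1/\cosh(\beta t)$, so it does not collapse this integral; you land precisely on the non-elementary sine transform (a combination of digamma functions, not $1/\cosh w$) that you said you were sidestepping. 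The obstruction is one of parity: the integrand of $CH_n$ is odd in $t$ (even power of $t$ times $\sin(s\tan^{-1}(t/a))$), so any Parseval or Fubini pairing with the even kernel $1/\cosh(\pi t/2)$ forces exactly that sine transform. Your alternative device, the binomial identity $t^{2n}(a\mp it)^{-s}=(-1)^n\sum_{m}\binom{2n}{m}(-a)^m(a\mp it)^{-(s-2n+m)}$, is correct and does reduce everything to the case $n=0$, but $n=0$ is where the parity problem sits, so it does not help.

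In fact your proposed intermediate identity is false as an identity for $CH_n$: reversing the order of integration in $\frac{(-1)^n(2n)!}{2\Gamma(s)}\int_0^\infty \frac{e^{-aw}w^{s-2n-1}L_{2n}^{s-2n-1}(aw)}{\cosh w}\,dw$ using (3.10) together with $\int_0^\infty e^{-aw}w^{s-1}\cos(wt)\,dw=\Gamma(s)\,\frac{\cos(s\tan^{-1}(t/a))}{(a^2+t^2)^{s/2}}$ shows that this quantity equals $\frac12\int_0^\infty \frac{t^{2n}\cos(s\tan^{-1}(t/a))}{(a^2+t^2)^{s/2}\cosh(\pi t/2)}\,dt$, i.e.\ the integral with $\cos$ in place of $\sin$. (At $n=0$ this yields $\int_0^\infty \frac{\cos(s\tan^{-1}(t/a))}{(a^2+t^2)^{s/2}\cosh(\pi t/2)}\,dt=P_4(a,s)$, which your machinery genuinely proves.) The paper omits all details here, and its own pattern elsewhere --- (3.20) pairs the $\cosh$ kernel with the \emph{even} integrand $t^{2n+1}\sin(s\tan^{-1}(t/a))$ --- indicates that the intended statement is the cosine analogue and that the printed $\sin$ (and arguably the prefactor $\tfrac12$) is an error. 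So, once the sine/cosine bookkeeping is done honestly, your argument establishes the corrected theorem but cannot establish the statement as written; a proof of the literal statement would have to go through the digamma-valued sine transform of $1/\cosh(\pi t/2)$, and the result would not be $P_4$.
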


The final result describes integrals containing odd powers of $t$ in the integrand. As before, the proofs are similar to that of 
Theorem 1.1, so they are omitted.
\begin{theorem} Let $n\in\mathbb{N}_0.$ For $\Re(a)>0$ and $-1\le 2n+1 <\Re(s),$ then
\begin{equation} \int_{0}^{\infty}\frac{t^{2n+1}\cos(s\tan^{-1}(t/a))dt}{(a^2+t^2)^{s/2}(e^{2\pi t}-1)}=\frac{1}{2}\sum_{m=0}^{2n+1}(-1)^{m+n}\binom{2n+1}{m}a^{m}P_1(a,m+s-2n-1),\end{equation}
\begin{equation} \int_{0}^{\infty}\frac{t^{2n+1}\cos(s\tan^{-1}(t/a))dt}{(a^2+t^2)^{s/2}\sinh(\pi t)}=\frac{1}{2}\sum_{m=0}^{2n+1}(-1)^{m+n}\binom{2n+1}{m}a^{m}P_2(a,m+s-2n-1),\end{equation}
\begin{equation} \int_{0}^{\infty}\frac{t^{2n+1}\cos(s\tan^{-1}(t/a))dt}{(a^2+t^2)^{s/2}(e^{\pi t}+1)}=\frac{1}{2}\sum_{m=0}^{2n+1}(-1)^{m+n}\binom{2n+1}{m}a^{m}P_3(a,m+s-2n-1),\end{equation}
and if $\Re(a)>0$ and $0\le 2n <\Re(s)-1,$
\begin{equation} \int_{0}^{\infty}\frac{t^{2n+1}\sin(s\tan^{-1}(t/a))dt}{(a^2+t^2)^{s/2}\cosh(\pi t/2)}=\frac{1}{2}\sum_{m=0}^{2n+1}(-1)^{m+n}\binom{2n+1}{m}a^{m}P_4(a,m+s-2n-1).\end{equation}
\end{theorem}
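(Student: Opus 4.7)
The plan is to repeat, for each of the four identities, the Parseval-plus-expansion procedure that proved Theorem~1.1. The kernels $1/(e^{2\pi t}-1)$, $1/\sinh(\pi t)$, $1/(e^{\pi t}+1)$ already have their Fourier sine transforms recorded in (1.10), (3.7), (3.9), and $1/\cosh(\pi t/2)$ has its Fourier cosine transform in (3.10). For (3.17)--(3.19) I would apply the sine form of Parseval (1.9), and for (3.20) the analogous cosine version, which is justified in the same way.

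The one genuinely new ingredient is the odd-power companion to (2.5). Starting from the decompositions
$$\frac{t^{2n+1}\cos(s\tan^{-1}(t/a))}{(a^2+t^2)^{s/2}}=\tfrac{1}{2}\,t^{2n+1}\!\left((a-it)^{-s}+(a+it)^{-s}\right),$$
$$\frac{t^{2n+1}\sin(s\tan^{-1}(t/a))}{(a^2+t^2)^{s/2}}=\tfrac{1}{2i}\,t^{2n+1}\!\left((a-it)^{-s}-(a+it)^{-s}\right),$$
parity guarantees that the first integrand paired with $\sin(wt)$, and the second paired with $\cos(wt)$, extend to even functions on the real line. The same Hankel-type contour used to prove entry 3.769.4 of [3] then yields an evaluation of the form
$$\int_{0}^{\infty}\frac{t^{2n+1}\cos(s\tan^{-1}(t/a))}{(a^2+t^2)^{s/2}}\sin(wt)\,dt=\frac{c_n\,(2n+1)!\,\pi}{\Gamma(s)\,e^{aw}\,w^{s-2n-2}}\,L_{2n+1}^{s-2n-2}(aw),$$
together with the analogous identity, differing only in a sign and an interchange of $\sin\leftrightarrow\cos$, needed for (3.20). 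A sign-neutral shortcut is to differentiate (2.5) in $w$ to pass from $t^{2n}\sin$ to $t^{2n+1}\cos$ under a cosine transform, and then combine this with a differentiation in $a$ to convert the cosine transform on the left into the sine transform actually required; either derivation is routine and no harder than (2.5) itself.

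With these transforms in hand, Parseval's identity produces, in each of the four cases, an integral of the shape
$$\frac{c_n}{\Gamma(s)}\int_{0}^{\infty}e^{-aw}w^{s-2n-2}L_{2n+1}^{s-2n-2}(aw)\,K(w)\,dw,$$
where $K(w)$ is the transformed kernel. Expanding $L_{2n+1}^{s-2n-2}(aw)$ by (2.4) breaks each integral into a finite sum whose terms, after rewriting $1/(e^w-1)$, $\tanh(w/2)$, $(1+e^{-w})^{-1}$, $(\cosh(w/2))^{-1}$ in the additive form used in the proofs of Theorems~1.1 and 3.2, are standard instances of (1.2) or of Lemma~3.1. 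The resulting Hurwitz values assemble into precisely the polynomials $P_1,P_2,P_3,P_4$ of the preceding theorems, while the binomial weights $(-1)^{m+n}\binom{2n+1}{m}a^{m}$ emerge from the Laguerre coefficients in (2.4) combined with the translation $a\mapsto a+c$ in the zeta arguments.

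The principal obstacle is the first step: entry 3.769.4 of [3] is tabulated only in the even-power, sine-transform form used for Theorem~1.1, so its odd-power counterpart must be established. Beyond that the work is bookkeeping, but two sanity checks deserve explicit attention. First, the hypothesis $0\le 2n<\Re(s)-1$ in (3.20) is tighter by one than in (3.17)--(3.19); this reflects the extra factor of $t$ in the numerator together with $\sin(s\tan^{-1}(t/a))\sim(s/a)t$ near the origin, and must be matched to the convergence of the Laguerre integrals on the transform side. Second, the alternating sign $(-1)^{m+n}$ in the statement fixes the relative orientation of the two evaluations of the odd-power contour integral and should be verified term by term against the Laguerre expansion.
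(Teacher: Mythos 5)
Your plan reproduces exactly the argument the paper intends (and omits): Parseval's identity with the tabulated kernel transforms of (1.10), (3.7), (3.9), (3.10), combined with the odd-power analogue of (2.5)--(2.6), which does follow routinely, e.g.\ from the Rodrigues-type identity $\frac{d}{dx}\left[e^{-x}x^{k}L_{2n}^{k}(x)\right]=(2n+1)e^{-x}x^{k-1}L_{2n+1}^{k-1}(x)$ applied to (2.6). The only slip is typographical: the displayed odd-power transform should carry $e^{-aw}w^{\,s-2n-2}L_{2n+1}^{s-2n-2}(aw)$ (the power of $w$ in the numerator, exactly as it correctly appears in your subsequent Parseval integrand), not $w^{-(s-2n-2)}$.
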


1390 Bumps River Rd. \\*
Centerville, MA
02632 \\*
USA \\*
E-mail: alexpatk@hotmail.com
\end{document}